\newtheorem{theorem}{Theorem}[section]
\newtheorem{lemma}[theorem]{Lemma}
\newtheorem{proposition}[theorem]{Proposition}
\newtheorem{corollary}[theorem]{Corollary}
\theoremstyle{definition}
\newtheorem{definition}[theorem]{Definition}
\newtheorem{example}[theorem]{Example}
\theoremstyle{remark}
\newtheorem{remark}[theorem]{Remark}
\numberwithin{equation}{subsection}
\DeclareMathOperator{\Tor}{Tor}
 \DeclareMathOperator{\Spec}{Spec}
\DeclareMathOperator{\depth}{depth}
\DeclareMathOperator{\pdim}{pdim}
\DeclareMathOperator{\fa}{\mathfrak{a}}
\DeclareMathOperator{\fm}{\mathfrak{m}}
\DeclareMathOperator{\fn}{\mathfrak{n}}
\DeclareMathOperator{\fp}{\mathfrak{p}}
\DeclareMathOperator{\cohodim}{cd}
\title{An extension of a theorem of Hartshorne}
\author[M.~Katzman]{Mordechai Katzman}
\address[Katzman]{Department of Pure Mathematics,
University of Sheffield, Hicks Building, Sheffield S3 7RH, United Kingdom}
\email{M.Katzman@sheffield.ac.uk}
\author[G.~Lyubeznik]{Gennady Lyubeznik}
\address[Lyubeznik]{School of Mathematics, University of Minnesota, 207 Church Street, Minneapolis, MN 55455, USA}
\email{gennady@math.umn.edu}
\author[W.~Zhang]{Wenliang Zhang}
\address[Zhang]{Department of Mathematics, University of Nebraska, Lincoln, NE 68588, USA}
\email{wzhang15@unl.edu}
\subjclass[2010]{13D45, 13F55, 14B15}
\thanks{M.K.  gratefully acknowledges support from EPSRC grant EP/J005436/1;
G.L. was partially supported by NSF grant DMS \#1161783, and W.Z. by NSF grants DMS \#1247354/\#1405602 and an EPSCoR First Award grant.
G.L. and W.Z. were also supported by NSF grant 0932078000 while in residence at MSRI}
\begin{document}

\begin{abstract}
We extend a classical theorem of Hartshorne concerning the connectedness of the punctured spectrum of a local ring by analyzing the homology groups of a simplicial complex associated with the minimal primes of a local ring.
\end{abstract}

\maketitle
\section{introduction}
R. Hartshorne proved in \cite[Proposition~2.1]{HartshorneCompleteIntersectionConnectedness} that, if $(R,\fm)$ is a noetherian local ring whose depth is at least 2, then the punctured spectrum ({\it i.e.} $\Spec(R)\backslash \{\fm\}$) is connected. In this note we partially extend this result to the case depth $\geq 3$. We express our extension in terms of a simplicial complex (which already appeared in \cite[Theorem 1.1]{LyubeznikSomeLCModules}) defined as follows.

\begin{definition}
\label{definition: the simplicial complex Delta}
Let $R$ be a noetherian commutative ring and let $\{\fp_1,\dots,\fp_n\}$ be the minimal primes of $R$. Assume that $R$ is either local with maximal ideal $\mathfrak m$ or that $R$ is graded with $\mathfrak m$ the ideal of elements of positive degrees. A simplicial complex $\Delta(R)$ (or $\Delta$ whenever $R$ are clear from the context) is defined as follows: $\Delta(R)$ is the simplicial complex on the vertices $1,\dots,n$ such that a simplex $\{i_0,\dots,i_s\}$ is included in $\Delta(R)$ if and only if $\sqrt{\fp_{i_0}+\cdots +\fp_{i_s}}\neq \fm$.
\end{definition}

Note that the punctured spectrum of $R$ ({\it i.e.} $\Spec(R)\backslash \{\fm\}$) is connected (as a topological space under the Zariski topology) if and only if $\widetilde{H}_0(\Delta(R),G)=0$ for some (equivalently every) non-zero abelian group, where $\widetilde{H}_0(\Delta(R),G)$ is the $0$-th reduced singular homology of the simplicial complex $\Delta(R)$ with coefficients in $G$. Thus Hartshorne's theorem says that if depth$R\geq 2$, then $\widetilde{H}_0(\Delta(R),G)=0.$
Our main result is the following extension of Hartshorne's result.

\begin{theorem}
\label{theorem: depth 3 char p}
Let $(R,\fm,k)$ be a noetherian commutative complete local ring  of characteristic $p$. Assume that the residue field $k$ is separably closed and that $\depth(R)\geq 3$. Then
\[\widetilde{H}_0(\Delta(R);k)=\widetilde{H}_1(\Delta(R);k)=0.\]
\end{theorem}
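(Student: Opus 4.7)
The plan is to exploit the Mayer--Vietoris spectral sequence of local cohomology for the cover $\Spec R=\bigcup_i V(\fp_i)$ by the irreducible components, partition its terms according to the combinatorics of $\Delta(R)$, and then descend from local-cohomology coefficients to $k$-coefficients via the Frobenius. Writing $J_S:=\sum_{i\in S}\fp_i$ for each nonempty $S\subseteq\{1,\dots,n\}$, the spectral sequence
\[
E_1^{p,q}\;=\;\bigoplus_{|S|=p+1}H^q_{J_S}(R)\;\Longrightarrow\;H^{p+q}_{\fp_1\cap\cdots\cap\fp_n}(R)
\]
has abutment equal to $R$ in total degree zero and zero in all positive total degrees, because the intersection of the minimal primes is $\sqrt{0}$.

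Next I would split each row $q$ of the $E_1$-page into the summands indexed by faces of $\Delta(R)$ (those $S$ with $\sqrt{J_S}\neq\fm$) and those indexed by non-faces (where $\sqrt{J_S}=\fm$, so that $H^q_{J_S}(R)=H^q_\fm(R)$). Since the non-face collection is upward closed, it forms a subcomplex of $E_1^{\bullet,q}$; combining the long exact sequence of the pair (full simplex, $\Delta(R)$) with contractibility of the full simplex identifies its $p$-th cohomology with $\widetilde{H}^{p-1}(\Delta(R);H^q_\fm(R))$. The depth hypothesis $\depth R\geq 3$ forces $H^0_\fm(R)=H^1_\fm(R)=H^2_\fm(R)=0$, so this non-face subcomplex vanishes identically for $q\leq 2$. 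Comparing with the trivial abutment and chasing differentials along the low-degree diagonals, I expect to extract the vanishings
\[
\widetilde{H}^0(\Delta(R);H^q_\fm(R))\;=\;\widetilde{H}^1(\Delta(R);H^q_\fm(R))\;=\;0
\]
for the relevant $q\geq 3$ (realistically $q=3$, which is where the hypothesis is sharp).

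The final step is to descend from coefficients in the potentially large Artinian module $M=H^q_\fm(R)$ to the residue field $k$. Here both the characteristic $p$ and the separably closed hypotheses enter: the Frobenius endomorphism of $R$ endows each $H^q_\fm(R)$ with a natural Lyubeznik $F$-module structure, and, over a complete local ring with separably closed residue field of characteristic $p$, the $F$-module formalism (together with Artin--Schreier surjectivity / Lang's theorem) allows one to detect vanishings of $\widetilde{H}^i(\Delta(R);M)$ at the level of the Frobenius-fixed part, ultimately yielding $\widetilde{H}^i(\Delta(R);k)=0$ for $i=0,1$. Dualising over $k$ then produces the required vanishing of reduced homology.

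The main obstacle I anticipate is precisely this last descent. The spectral-sequence bookkeeping is essentially formal once the face/non-face split is set up, but moving from coefficients in a non-finitely-generated Artinian $H^q_\fm(R)$ to coefficients in the one-dimensional $k$ is exactly where the characteristic $p$ and separably closed assumptions become indispensable, and where one must use the full strength of the $F$-module technology rather than any purely combinatorial or cohomological argument.
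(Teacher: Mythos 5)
Your proposal correctly identifies the Mayer--Vietoris spectral sequence and the face/non-face dichotomy as the engine, but the execution has a gap that the paper avoids by a crucial change of setting. You run the spectral sequence over $R$ itself and look at the low rows $q\leq 3$, where the depth hypothesis kills $H^q_\fm(R)$. The trouble is that in row $q=3$ the $E_1$ terms $H^q_{J_S}(R)$ for $S$ a \emph{face} of $\Delta(R)$ are uncontrolled nonzero local cohomology modules, so the row does not collapse to the relative chain complex of $(\Delta_n,\Delta(R))$ with constant coefficients; you cannot isolate the non-face part. (A minor slip here: since the $E_1$ differential removes vertices, the face-indexed summands form a subcomplex and the non-face-indexed ones a quotient, not a subcomplex.) The paper instead presents $R=S/I$ with $(S,\fn)$ complete \emph{regular} local and runs the spectral sequence for $H^*_I(S)$ over $S$, looking at the \emph{top} row $b=d=\dim S$. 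There the Hartshorne--Lichtenbaum vanishing theorem says exactly that $H^d_{J_S}(S)=0$ precisely when $S$ is a face, so the entire top row is literally the relative chain complex of $(\Delta_n,\Delta(S/I))$ with coefficients $H^d_\fn(S)$, decoupling combinatorics from commutative algebra. The regularity of $S$ is then essential for the vanishing of the abutment in low codegrees: Peskine--Szpiro's bound $\cohodim(S,I)\leq\dim S-\depth(S/I)$ requires $S$ regular (and characteristic $p$), and the separably closed residue field is needed for the second vanishing theorem giving $\cohodim(S,\fp_j)\leq\dim S-2$, which kills an outgoing differential. So the passage to $S$ is not cosmetic; your version over $R$ lacks the inputs needed to make the spectral sequence compute anything combinatorial.

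The final descent from $H^d_\fn(S)$-coefficients to $k$-coefficients is also much more elementary than the $F$-module/Lang/Artin--Schreier machinery you invoke. Since $S$ is a $d$-dimensional domain, multiplication by any nonzero $r\in S$ is surjective on $H^d_\fn(S)$ (because $H^d_\fn(S/(r))=0$), and a short Universal Coefficient Theorem argument then shows $\widetilde H_1(\Delta;\,H^d_\fn(S))=0$ if and only if $\widetilde H_1(\Delta;k)=0$. Frobenius enters only implicitly, through Peskine--Szpiro's cohomological-dimension bound; it does not appear in the coefficient descent, and the separably closed hypothesis is consumed earlier than you place it.
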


We also have a graded analog of Theorem \ref{theorem: depth 3 char p} over any separably closed field (not just in characteristic $p$).
\begin{theorem}
\label{theorem: depth 3 graded case}
Let $R=\bigoplus_{i\in \mathbb{N}} R_i$ be a standard $\mathbb{N}$-graded ring over a separably closed field $k$ ({\it i.e.} $R_0=k$ and $R$ is finitely generated by $R_1$ as a $k$-algebra). If $\depth(R)\geq 3$, then
\[\widetilde{H}_0(\Delta(R);k)=\widetilde{H}_1(\Delta(R);k)=0.\]
\end{theorem}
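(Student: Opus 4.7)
The plan is to pass to the projective scheme $X = \Proj R$ over the separably closed field $k$. Its irreducible components $X_i \subset X$, indexed by the minimal primes $\fp_i$, form a closed cover; for any subset $S \subseteq \{1, \ldots, n\}$, the intersection $X_S := \bigcap_{i \in S} X_i$ is non-empty exactly when $\sqrt{\fp_S} \neq \fm$, where $\fp_S = \sum_{i \in S} \fp_i$. Hence $\Delta(R)$ is precisely the nerve of $\{X_i\}$. The Mayer--Vietoris spectral sequence associated to this closed cover, in \'etale cohomology with constant coefficients in $k$,
\[
E_1^{p,q} \;=\; \bigoplus_{|S|=p+1,\; S \in \Delta(R)} H^q(X_S, k) \;\Longrightarrow\; H^{p+q}(X, k),
\]
has bottom row $E_1^{\bullet, 0}$ equal to the simplicial cochain complex of $\Delta(R)$ with coefficients in $k$, provided each non-empty $X_S$ is connected. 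A quick degree count then shows that $E_\infty^{0,0} = H^0(\Delta(R);k)$ and $E_\infty^{1,0} = \widetilde H^1(\Delta(R);k)$, since no nonzero differentials can enter or leave these positions (the relevant sources and targets lie in $p < 0$ or $q < 0$). Thus $\widetilde H_0(\Delta(R);k)$ and $\widetilde H_1(\Delta(R);k)$ would follow from $\widetilde H^0(X,k)=0$ and $H^1(X,k)=0$.

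I would extract these cohomological vanishings from the depth hypothesis through the graded identification $H^{i+1}_\fm(R) \cong \bigoplus_{n \in \mathbb{Z}} H^i(X, \mathcal{O}_X(n))$ for $i \geq 1$: the bound $\depth R \geq 3$ forces $H^1(X, \mathcal{O}_X(n)) = 0$ for every $n$, in particular $H^1(X, \mathcal{O}_X)=0$. Connectedness of $X$, giving $\widetilde H^0(X,k)=0$, is already granted by Hartshorne's original theorem since $\depth R \geq 2$. To convert $H^1(X, \mathcal{O}_X)=0$ to $H^1(X, k)=0$, in characteristic $p$ I would use the Artin--Schreier exact sequence $0 \to \mathbb{F}_p \to \mathcal{O}_X \xrightarrow{F-1} \mathcal{O}_X \to 0$ on the \'etale site, exploiting that separable closedness of $k$ makes $F-1$ on $H^0(X,\mathcal O_X) = k^{\pi_0(X)}$ surjective, and then extending scalars from $\mathbb F_p$ to $k$. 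In characteristic $0$, the natural substitute is a Kummer sequence together with a GAGA-and-Hodge-theory argument, reducing to $k=\mathbb{C}$ through the Lefschetz principle.

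The principal obstacle is two-fold. First, one must ensure the connectedness of each non-empty $X_S$ so that the bottom row of the spectral sequence really does compute $H^\bullet(\Delta(R);k)$. Since $\depth(R/\fp_S)$ need not inherit the bound $\depth R \geq 3$, Hartshorne's theorem cannot be applied directly to these quotients; this likely forces an inductive argument handling the connectedness of all faces of $\Delta(R)$ jointly with the cohomological calculation, or invocation of a stronger connectedness theorem in the spirit of Fulton--Hansen or Grothendieck--Lefschetz (SGA~2). Second, the characteristic-$0$ case of the transfer from structure-sheaf to constant-sheaf cohomology is delicate for non-smooth $X$, where $H^1(X,\mathcal O_X)=0$ does not immediately force $H^1(X,k)=0$; one may need to route through a mixed-characteristic descent that reduces the characteristic-$0$ case to Theorem~\ref{theorem: depth 3 char p}.
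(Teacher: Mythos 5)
Your strategy is genuinely different from the paper's. You pass to $\Proj R$, view $\Delta(R)$ as the nerve of the closed cover by irreducible components $X_i$, and feed the Mayer--Vietoris spectral sequence (in \'etale cohomology with constant coefficients) the cohomology of $X=\Proj R$; the vanishing $H^1(X,\mathcal O_X)=0$ from $\depth R\geq 3$ is then transported to $H^1_{\text{\rm\'et}}(X,k)=0$ by Artin--Schreier. The paper instead never leaves the affine/local-cohomology world: it writes $R=S/I$ with $S$ a polynomial ring, runs the Mayer--Vietoris-type spectral sequence (\ref{MV spectral sequence}) for local cohomology supported at sums of minimal primes, identifies the top row $E_1^{\bullet,d}$ with the relative chain complex of $(\Delta_n,\Delta(R))$ via the Hartshorne--Lichtenbaum theorem (Lemma \ref{lemma: E2 and relative homology}), and kills $E_2^{-1,d}$ and $E_2^{-2,d}$ by the cohomological-dimension bounds $\cohodim(S,\fp_j)\leq d-2$ and $\cohodim(S,I)\leq d-3$, the latter coming from Peskine--Szpiro in characteristic $p$ and Varbaro in characteristic $0$.

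The gap you flag in your first obstacle is, as you suspect, fatal as written. For the bottom row $E_1^{\bullet,0}$ of your spectral sequence to be the simplicial cochain complex of $\Delta(R)$, every non-empty intersection $X_S$ must be connected, and there is no reason for this: $\depth(R/\fp_S)$ is uncontrolled, and $V(\fp_S)$ may well be disconnected for a face $S$ of $\Delta(R)$. Neither Fulton--Hansen nor SGA~2 connectedness applies to these subschemes without further hypotheses, and an induction would have to control depths of sums of primes, which the hypothesis $\depth R\geq 3$ does not do. This is precisely the difficulty that the paper's use of Hartshorne--Lichtenbaum sidesteps: $H^d_{\fp_S}(S)$ is nonzero if and only if $\sqrt{\fp_S}=\fn$, so the top row encodes ``face or non-face'' with no connectedness hypothesis on $V(\fp_S)$. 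Your second obstacle (the characteristic-$0$ transfer from $H^1(X,\mathcal O_X)=0$ to $H^1(X,k)=0$ for singular $X$) is also not a routine GAGA/Hodge statement; it is exactly where the paper substitutes Varbaro's theorem bounding $\cohodim(S,I)$ by $\depth$, which is a nontrivial result in its own right and is not recoverable from the Artin--Schreier/Kummer circle of ideas. In short: the architecture you propose is coherent, and the degeneration bookkeeping at $E_\infty^{0,0}$ and $E_\infty^{1,0}$ is correct, but the two items you list as ``obstacles'' are unfilled gaps, not loose ends, and the paper's argument is designed precisely to avoid them.
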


Hartshorne's theorem provides a sufficient condition for $\depth(R/I)\leq 1$. Namely, if $\tilde H_0(\Delta(R);k)\ne 0$, then $\depth(R/I)\leq 1$. Our Theorem \ref{theorem: depth 3 graded case} provides a sufficient condition for $\depth(R/I)\leq 2$. Namely, if $\widetilde{H}_1(\Delta(R);k)\neq 0$, then $\depth(R/I)\leq 2$.

We also show by an example (Example \ref{Reisner's example}) that Theorem \ref{theorem: depth 3 graded case}
(and hence also Theorem \ref{theorem: depth 3 char p})
does not necessarily hold if one replaces $k$ in $\widetilde{H}_*(\Delta(R);k)$ with $\mathbb{Z}$.

We do not know whether the assumptions that $R$ is complete and the residue field is separably closed in Theorem \ref{theorem: depth 3 char p} or the assumption that $k$ is separably closed in Theorem \ref{theorem: depth 3 graded case} can be removed. Neither do we know whether the analogue of Theorem \ref{theorem: depth 3 char p} in characteristic 0 holds.

Finally, it would be very interesting to know whether there exists a similar connection between higher values of the depth of
$R$ and the vanishing of higher homology groups of the complex $\Delta(R)$. When $R$ is a Stanley-Reisner ring
we obtain a natural extension of  Theorem \ref{theorem: depth 3 graded case}  (Corollary \ref{Corollary: depth and homology in Stanley Reisner rings})
and show that if $\depth R \geq d$ then $\widetilde{H}_j \left( \Delta(R), k \right)=0$ for all $0\leq j\leq d-2$.

\section{Proof of Theorem \ref{theorem: depth 3 char p}}
In this section, we will prove our main technical result that implies Theorems \ref{theorem: depth 3 char p} and \ref{theorem: depth 3 graded case}. To this end, we recall the definition of the cohomological dimension.

\begin{definition}
Let $I$ be an ideal of a noetherian commutative ring $R$. Then the {\it cohomological dimension} of the pair $(R,I)$, denoted by $\cohodim(R,I)$, is defined by
\[\cohodim(R,I):=\max\{j|H^j_I(R)\neq 0\},\]
where $H^j_I(R)$ is the $j$th local cohomology module of $R$ supported at $I$.
\end{definition}

We will also consider the following spectral sequence (\cite[p.~39]{AM-GL-Z})
\begin{equation}
\label{MV spectral sequence}
E^{-a,b}_1=\bigoplus_{i_0<\cdots <i_a}H^b_{\fp_{i_0}+\cdots +\fp_{i_a}}(R) \Rightarrow H^{b-a}_{\fp_1\cap \cdots \cap \fp_n}(R)=H^{b-a}_I(R).
\end{equation}
where $R$ is a noetherian commutative ring, $I$ is an ideal of $R$, and $\{\fp_1,\dots,\fp_s\}$ is the set of minimal primes of $I$.

To prove Theorem \ref{theorem: depth 3 char p}, we will need the following results which are completely characteristic-free; they hold even in mixed-characteristic.

\begin{lemma}
\label{lemma: E2 and relative homology}
Let $I$ be an ideal of a $d$-dimensional noetherian complete local domain $(R,\fm,k)$ and let $\{\fp_1,\dots,\fp_n\}$ be the set of minimal primes of $I$. Then
\[E^{-t,d}_2\cong H_t(\Delta_n,\Delta(R/I);H^d_{\fm}(R)),\]
where $E^{-t,d}_2$ is the $E_2$-term of the spectral sequence (\ref{MV spectral sequence}), $\Delta_n$ denotes the full $n$-simplex, and $H_t(\Delta_n,\Delta(R/I);H^d_{\fm}(R))$ denotes the singular homology of the pair $(\Delta_n,\Delta(R/I))$ with coefficients in $H^d_{\fm}(R)$.
\end{lemma}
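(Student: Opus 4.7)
My plan is to compute the row $E_1^{-\bullet,d}$ of (\ref{MV spectral sequence}) explicitly using the Hartshorne--Lichtenbaum vanishing theorem, and then to recognize the resulting chain complex $(E_1^{-\bullet,d}, d_1)$ as the relative simplicial chain complex of the pair $(\Delta_n, \Delta(R/I))$ with coefficients in $H^d_\fm(R)$. Since $E_2^{-t,d}$ is by construction the homology of that chain complex at position $-t$, this immediately yields the claimed formula.

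For the first step, the Hartshorne--Lichtenbaum vanishing theorem (applicable because $(R,\fm)$ is a complete local domain of dimension $d$) asserts that for any ideal $J\subseteq R$ one has $H^d_J(R)=0$ unless $\sqrt{J}=\fm$, in which case $H^d_J(R)\cong H^d_\fm(R)$ via the canonical map. Applied to $J=\fp_{i_0}+\cdots+\fp_{i_a}$, each summand of $E_1^{-a,d}$ is a copy of $H^d_\fm(R)$ exactly when $\{i_0,\ldots,i_a\}\notin \Delta(R/I)$, and is zero otherwise. Hence $E_1^{-a,d}$ is canonically identified with $C_a(\Delta_n,\Delta(R/I))\otimes_{\mathbb{Z}} H^d_\fm(R)$, the free $H^d_\fm(R)$-module on the $a$-simplices of $\Delta_n$ that do not lie in $\Delta(R/I)$.

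For the second step, I would match $d_1$ with the relative simplicial boundary. The Mayer--Vietoris construction behind (\ref{MV spectral sequence}) makes $d_1\colon E_1^{-a,d}\to E_1^{-a+1,d}$ the alternating sum $\sum_{j=0}^{a}(-1)^j \phi_j$, where $\phi_j\colon H^d_{\fp_{i_0}+\cdots+\fp_{i_a}}(R)\to H^d_{\fp_{i_0}+\cdots+\widehat{\fp_{i_j}}+\cdots+\fp_{i_a}}(R)$ is the canonical map induced by the inclusion of the target ideal into the source. Under the identifications of the previous paragraph (themselves the canonical maps $H^d_J(R)\to H^d_\fm(R)$ for $\sqrt{J}=\fm$), each nonzero $\phi_j$ is the identity of $H^d_\fm(R)$ by naturality, and $\phi_j$ is automatically zero whenever the face $\{i_0,\ldots,\widehat{i_j},\ldots,i_a\}$ lies in $\Delta(R/I)$, its target summand being zero. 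This is precisely the boundary $\partial[i_0,\ldots,i_a]=\sum_j(-1)^j[i_0,\ldots,\widehat{i_j},\ldots,i_a]$ read in the relative complex $C_\bullet(\Delta_n,\Delta(R/I))$, where faces in $\Delta(R/I)$ are quotiented out. Thus $(E_1^{-\bullet,d}, d_1)\cong C_\bullet(\Delta_n,\Delta(R/I))\otimes H^d_\fm(R)$ as chain complexes, and passing to homology at $-t$ gives $E_2^{-t,d}\cong H_t(\Delta_n,\Delta(R/I); H^d_\fm(R))$.

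The main obstacle is the verification in the previous paragraph that $d_1$ really is the signed sum of the functorial local-cohomology maps, and that the canonical isomorphisms $H^d_J(R)\cong H^d_\fm(R)$ intertwine these with the face maps of the simplex; this must be read off from the construction of the spectral sequence in \cite{AM-GL-Z}. Once this compatibility is in place, the remainder of the proof is a routine translation between local-cohomological and simplicial bookkeeping.
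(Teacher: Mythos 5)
Your proposal is correct and follows essentially the same route as the paper's proof: apply Hartshorne--Lichtenbaum to the row $b=d$ of (\ref{MV spectral sequence}) to see that the surviving summands are exactly the copies of $H^d_{\fm}(R)$ indexed by the non-faces of $\Delta(R/I)$, recognize the resulting complex as the relative chain complex of $(\Delta_n,\Delta(R/I))$ with coefficients in $H^d_{\fm}(R)$, and take homology. The paper expresses the identification more compactly via the short exact sequence $0\to C^*(\Delta(R/I))\to C^*(\Delta_n)\to \widetilde{C}^*\to 0$, whereas you spell out the compatibility of $d_1$ with the signed face maps explicitly; that check is indeed the point being silently used in the paper's assertion of the short exact sequence, so your extra care is warranted but does not constitute a different argument.
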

\begin{proof}
The spectral sequence (\ref{MV spectral sequence}) with $b=d$ gives us a complex
\begin{equation}
\label{complex: E1 page}
\cdots \to E^{-3,d}_1\to E^{-2,d}_1\to E^{-1,d}_1\to\cdots
\end{equation}
whose homology is $E^{*,d}_2$. By the Hartshorne-Lichtenbaum vanishing theorem (\cite[Theorem~3.1]{HartshorneCohomologicalDimension}), for each ideal $\fa$ of $R$, one has $H^d_{\fa}(R)=0$ if and only if $\sqrt{\fa}\neq \fm$. Consequently, the complex (\ref{complex: E1 page}) becomes
\begin{equation}
\label{complex: only top local cohomology surviving}
\cdots \to \bigoplus_{\substack{j_0<\cdots <j_t;\\ \sqrt{\fp_{j_0}+\cdots +\fp_{j_t}}=\fm}}H^d_{\fm}(R)\to \bigoplus_{\substack{i_0<\cdots <i_{t-1};\\ \sqrt{\fp_{i_0}+\cdots+\fp_{i_{t-1}}}=\fm}}H^d_{\fm}(R) \to \cdots
\end{equation}

Let $C^*(\Delta_n)$ and $C^*(\Delta(R/I))$ denote the complexes of singular chains of $\Delta_n$ and $\Delta(R/I)$ with coefficients in $H^d_{\fm}(R)$ respectively and let $\widetilde{C}^*$ denote the complex of (\ref{complex: only top local cohomology surviving}). One can see that the condition imposed on $\sqrt{\fp_1+\cdots+\fp_t}$ appearing in (\ref{complex: only top local cohomology surviving}) is the opposite to the one imposed in the definition of $\Delta(R/I)$; consequently, one has a short exact sequence
\[0\to C^*(\Delta(R/I))\to C^*(\Delta_n)\to \widetilde{C}^*\to 0,\]
{\it i.e.} $\widetilde{C}^*$ is the complex of singular chains of the pair $(C^*(\Delta_n), C^*(\Delta(R/I)))$ with coefficients in $H^d_{\fm}(R)$. Hence the homology of the complex (\ref{complex: only top local cohomology surviving}) is the relative homology $H_*(\Delta_n,\Delta(R/I);H^d_{\fm}(R))$, {\it i.e.}
\[E^{-t,d}_2\cong H_t(\Delta_n,\Delta(R/I);H^d_{\fm}(R)),\]
for each integer $t$.
\end{proof}

\begin{theorem}
\label{theorem: main technical theorem}
Let $I$ be an ideal of a $d$-dimensional noetherian complete local domain $(R,\fm,k)$. Assume that $\cohodim(R,I)\leq d-3$ and $\cohodim(R,\fp_j)\leq d-2$ for each minimal prime $\fp_j$ of $I$. Then
\[\widetilde{H}_0(\Delta(R/I);H^d_{\fm}(R))=\widetilde{H}_1(\Delta(R/I);H^d_{\fm}(R))=0\]
where $\widetilde{H}_*(\Delta(R/I);H^d_{\fm}(R))$ denotes the reduced singular homology of $\Delta(R/I)$ with coefficients in $H^d_{\fm}(R)$.
\end{theorem}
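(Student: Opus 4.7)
The plan is to read both desired vanishings off the $E_2$-page of the spectral sequence (\ref{MV spectral sequence}) (applied to the ideal $I$ of the theorem), combining Lemma \ref{lemma: E2 and relative homology} with the two cohomological dimension hypotheses.

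First I would identify the relevant $E_2$-terms. By Lemma \ref{lemma: E2 and relative homology}, $E_2^{-t,d}\cong H_t(\Delta_n,\Delta(R/I);H^d_\fm(R))$. Since $\Delta_n$ is contractible, the long exact sequence of the pair $(\Delta_n,\Delta(R/I))$ in reduced homology yields
\[H_t(\Delta_n,\Delta(R/I);M)\cong \widetilde{H}_{t-1}(\Delta(R/I);M)\]
for every coefficient group $M$ and every $t\geq 1$. Taking $M=H^d_\fm(R)$ gives
\[E_2^{-1,d}\cong \widetilde{H}_0(\Delta(R/I);H^d_\fm(R)),\qquad E_2^{-2,d}\cong \widetilde{H}_1(\Delta(R/I);H^d_\fm(R)),\]
so it is enough to prove that these two $E_2$-entries vanish.

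Next I would show that at both positions we already have $E_2=E_\infty$. Recall that $d_r\colon E_r^{-a,b}\to E_r^{-a+r,b-r+1}$. Two general observations eliminate most differentials: the spectral sequence is supported in the half-plane $p=-a\leq 0$, so any differential whose target has strictly positive first coordinate is automatically zero; and Grothendieck vanishing gives $E_1^{\ast,b}=0$ for $b>d=\dim R$, killing every differential on pages $r\geq 2$ that would enter the row $b=d$ from above. Together these two facts already kill every differential in and out of $E_r^{-1,d}$ for $r\geq 2$. For $(-2,d)$ the only differential not eliminated by position is $d_2\colon E_2^{-2,d}\to E_2^{0,d-1}$; its target vanishes because
\[E_1^{0,d-1}=\bigoplus_{i} H^{d-1}_{\fp_i}(R)=0,\]
by the hypothesis $\cohodim(R,\fp_i)\leq d-2$. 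Hence $E_2^{-1,d}=E_\infty^{-1,d}$ and $E_2^{-2,d}=E_\infty^{-2,d}$.

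Finally, the spectral sequence converges to $H^{b-a}_I(R)$, so $E_\infty^{-1,d}$ is a subquotient of $H^{d-1}_I(R)$ and $E_\infty^{-2,d}$ is a subquotient of $H^{d-2}_I(R)$; both of these vanish by the hypothesis $\cohodim(R,I)\leq d-3$, yielding the desired conclusion. The main obstacle is the purely bookkeeping-level verification that no nonzero higher differential can connect the two $E_2$-entries in question to a surviving group on a later page; the hypothesis $\cohodim(R,\fp_i)\leq d-2$ is used in exactly one place, namely to annihilate $E_2^{0,d-1}$, and without it the vanishing of $\widetilde{H}_1$ would fail through a potentially nonzero $d_2$.
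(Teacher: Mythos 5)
Your proof is correct and follows essentially the same route as the paper: it analyzes the same Mayer--Vietoris spectral sequence, uses the same position/Grothendieck-vanishing arguments together with the hypothesis $\cohodim(R,\fp_j)\leq d-2$ to show $E_2=E_\infty$ at $(-1,d)$ and $(-2,d)$, and concludes via the abutment and $\cohodim(R,I)\leq d-3$. The only cosmetic difference is that you translate the $E_2$-terms into reduced homology of $\Delta(R/I)$ at the outset (via the long exact sequence of the pair), whereas the paper performs this translation at the end.
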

\begin{proof}
Assume that $I$ has $n$ minimal primes $\fp_1,\dots,\fp_n$, and we will consider the spectral sequence (\ref{MV spectral sequence}).

Grothedieck's Vanishing Theorem (\cite[6.1.2]{BrodmannSharpLocalCohomology}) asserts that $H^j_I(M)=0$ for any ideal $I$ of a $d$-dimensional noetherian commutative ring $A$, any $A$-module $M$, and any integer $j>d$. Therefore, for each $r\geq 2$, one has
\[E^{-2-r,d+r-1}_1=\bigoplus_{i_0<\cdots <i_{2+r}}H^{d+r-1}_{\fp_{i_0}+\cdots +\fp_{i_{2+r}}}(R)=0;\]
hence the (incoming) differential $E^{-2-r,d+r-1}_r\to E^{-2,d}_r$ is 0. Since $E^{0,d-1}_1=\bigoplus_jH^{d-1}_{\fp_j}(R)$ and $\cohodim(R,\fp_j)\leq d-2$, we have $E^{0,d-1}_1=0$ and hence the (outgoing) differential $E^{-2,d-1}_2\to E^{0,d-1}_2$ is 0. If $r\geq 3$, then $-2+r>0$ and clearly $E^{-2+r,d-r+1}_r=0$. This shows that, for $r\geq 2$, all incoming and outgoing differentials from $E^{-2,d}_r$ are 0; consequently $E^{-2,d}_2=E^{-2,d}_{\infty}$. On the other hand, since $\cohodim(R,I)\leq d-3$, we have $H^{d-2}_I(R)=0$. Therefore $E^{-2,d}_2=0$. Similarly, one can also show that $E^{-1,d}_2=0$. On the other hand, it follows from the Hartshorne-Lichtenbaum vanishing theorem (\cite[Theorem~3.1]{HartshorneCohomologicalDimension}) that $E^{0,d}_1=0$. Thus, $E^{0,d}_2=0$.

Therefore, it follows from Lemma \ref{lemma: E2 and relative homology} that
\[H_2(\Delta_n,\Delta(R/I);H^d_{\fm}(R))\cong E^{-2,d}_2=0\ {\rm and}\ H_1(\Delta_n,\Delta(R/I);H^d_{\fm}(R))\cong E^{-1,d}_2=0.\]
The long exact sequence of homology
\begin{align}
\cdots & \to H_j(\Delta_n;H^d_{\fm}(R))\to H_j(\Delta_n,\Delta(R/I);H^d_{\fm}(R))\to H_{j-1}(\Delta(R/I);H^d_{\fm}(R))\notag\\
&\to H_{j-1}(\Delta_n;H^d_{\fm}(R))\to \cdots\notag
\end{align}
implies that
\begin{align}
H_1(\Delta(R/I);H^d_{\fm}(R))&\cong H_1(\Delta_n;H^d_{\fm}(R))=0\notag\\
H_0(\Delta(R/I);H^d_{\fm}(R))&\cong H_0(\Delta_n;H^d_{\fm}(R))=H^d_{\fm}(R)\notag
\end{align}
where $H_1(\Delta_n;H^d_{\fm}(R))=0$ since $\Delta_n$ is contractible. Therefore, we have
\begin{align}
\widetilde{H}_1(\Delta(R/I);H^d_{\fm}(R))&=0\notag\\
\widetilde{H}_0(\Delta(R/I);H^d_{\fm}(R))&=0.\notag
\end{align}
This finishes the proof of our theorem.
\end{proof}

\begin{remark}
\label{remark: coefficients of homology group}
By the Universal Coefficient Theorem (\cite[Theorem~3A.3]{HatcherAlgTopBook}), for each topological space $X$ and an abelian group $G$, there is a short exact sequence
\[0\to H_i(X;\mathbb{Z})\otimes_{\mathbb{Z}}G \to H_i(X;G) \to \Tor_1(H_{i-1}(X;\mathbb{Z}),G)\to 0.\]
Since $H_0(X;\mathbb{Z})$ is always a free $\mathbb{Z}$-module, we have $\Tor_1(H_{0}(X;\mathbb{Z}),G)=0$ and hence $H_1(X;G)\cong H_1(X;\mathbb{Z})\otimes_{\mathbb{Z}}G$. In particular, with notation as in Theorem \ref{theorem: main technical theorem}, we have
\begin{align}
H_1(\Delta(R/I);H^d_{\fm}(R)) &\cong H_1(\Delta(R/I);\mathbb{Z})\otimes_{\mathbb{Z}}H^d_{\fm}(R)\notag\\
H_1(\Delta(R/I);k) &\cong H_1(\Delta(R/I);\mathbb{Z})\otimes_{\mathbb{Z}}k\notag
\end{align}
Under the assumptions of Theorem \ref{theorem: main technical theorem}, for each nonzero element $r\in R$, the short exact sequence $0\to R\xrightarrow{r}R\to R/(r)\to 0$ induces an exact sequence
\[\cdots \to H^d_{\fm}(R)\xrightarrow{r} H^d_{\fm}(R)\to H^d_{\fm}(R/(r))=0,\]
where $H^d_{\fm}(R/(r))=0$ since $\dim(R/(r))<d$. This shows that $rH^d_{\fm}(R)=H^d_{\fm}(R)$ for each nonzero element $r\in R$. Therefore,
\begin{enumerate}
\item when $R$ doesn't have any integer torsion ({\it i.e.} when $R$ contains $\mathbb{Q}$ or doesn't contain a field), $H_1(\Delta(R/I);\mathbb{Z})\otimes_{\mathbb{Z}}H^d_{\fm}(R)\neq 0$ if and only if $H_1(\Delta(R/I);\mathbb{Z})$ contains a copy of $\mathbb{Z}$;
\item when the characteristic of $R$ is $p$, $H_1(\Delta(R/I);\mathbb{Z})\otimes_{\mathbb{Z}}H^d_{\fm}(R)\neq 0$ if and only if $H_1(\Delta(R/I);\mathbb{Z})$ contains either a copy of $\mathbb{Z}$ or a copy of $\mathbb{Z}/p\mathbb{Z}$.
\end{enumerate}
Consequently, when $R$ contains a field, it is clear that $H_1(\Delta(R/I);\mathbb{Z})\otimes_{\mathbb{Z}}H^d_{\fm}(R)=0$ if and only if $H_1(\Delta(R/I);\mathbb{Z})\otimes_{\mathbb{Z}}k=0$, {\it i.e.} \[\widetilde{H}_1(\Delta(R/I);H^d_{\fm}(R))=0\Leftrightarrow H_1(\Delta(R/I);k)=0.\]
On the other hand, it should be clear that
\[\widetilde{H}_0(\Delta(R/I);H^d_{\fm}(R))=0\Leftrightarrow H_0(\Delta(R/I);k)=0\]
always holds.
\end{remark}

To prove Theorem \ref{theorem: depth 3 char p}, we also need the following result due to Peskine-Szpiro.

\begin{theorem}[Remarque on p.~386 in \cite{PeskineSzpiroDimensionProjective}]
\label{theorem: Peskine-Szpiro}
Let $R$ be a $d$-dimensional regular ring of characteristic $p$ and let $I$ be an ideal of $R$. Then
\[\cohodim(R,I)\leq d-\depth(R/I).\]
\end{theorem}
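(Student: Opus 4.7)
The plan is to exploit Kunz's theorem: for $R$ regular of characteristic $p$, the Frobenius endomorphism $F\colon R\to R$, $r\mapsto r^p$, is flat. Set $t=\depth(R/I)$; the goal reduces to showing $H^j_I(R)=0$ for all $j>d-t$.

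The first step is to replace the filtration $\{I^n\}$ used to compute local cohomology by the Frobenius filtration $\{I^{[p^n]}\}$, where $I^{[p^n]}$ denotes the ideal generated by the $p^n$-th powers of elements of $I$. Since these two families are cofinal in each other (each contains a power of the other, with the containment $I^{[p^n]}\subseteq I^{p^n}\subseteq I^n$ in one direction and a pigeonhole argument on a generating set in the other), one obtains
\[H^j_I(R)\;=\;\varinjlim_n \Ext^j_R(R/I^n,R)\;=\;\varinjlim_n \Ext^j_R(R/I^{[p^n]},R),\]
so it suffices to prove that $\Ext^j_R(R/I^{[p^n]},R)=0$ for every $n$ and every $j>d-t$.

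The second step is to control projective dimensions under Frobenius. The Auslander--Buchsbaum formula, applied locally at each maximal ideal containing $I$, gives $\pdim_R(R/I)\leq d-t$. The Frobenius functor $F^n(M):=R\otimes_{R,F^n}M$ sends $R/I$ to $R/I^{[p^n]}$, sends free modules to free modules, and is exact by Kunz's theorem. Applying $F^n$ to a finite free resolution of $R/I$ of length $\leq d-t$ therefore produces a finite free resolution of $R/I^{[p^n]}$ of the same length, whence $\pdim_R(R/I^{[p^n]})\leq d-t$ and $\Ext^j_R(R/I^{[p^n]},R)=0$ for $j>d-t$. Passing to the direct limit completes the argument.

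The main subtlety will be the passage between local and global statements: the Auslander--Buchsbaum formula and the depth formula are most naturally applied in the local setting, whereas the theorem is stated for a possibly non-local regular ring. I would either work locally throughout, reducing the vanishing of $H^j_I(R)$ to that of $H^j_{IR_{\fm}}(R_{\fm})$ at each maximal ideal $\fm$, or be careful to phrase the projective-dimension bound globally. A smaller point to verify is the identification $F^n(R/I)=R/I^{[p^n]}$, which is obtained by applying the right-exact functor $F^n$ to the presentation $I\hookrightarrow R\twoheadrightarrow R/I$.
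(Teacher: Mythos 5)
The paper does not prove this statement; it cites it directly from Peskine--Szpiro (the ``Remarque'' on p.~386 of their paper). So there is no internal proof to compare against. What you have written is, in substance, Peskine and Szpiro's original argument, and it is correct: Kunz's theorem gives flatness of Frobenius, the Frobenius functor $F^n$ carries a finite projective resolution of $R/I$ to one of $R/I^{[p^n]}$ of the same length, Auslander--Buchsbaum bounds that length by $d-\depth(R/I)$, and cofinality of $\{I^{[p^n]}\}$ in $\{I^n\}$ then forces $H^j_I(R)=\varinjlim_n \Ext^j_R(R/I^{[p^n]},R)=0$ for $j>d-\depth(R/I)$. The one place I would tighten the wording: over a non-local regular ring you cannot assume projective modules are free, so speak of finite \emph{projective} resolutions rather than free ones; $F^n$ still preserves projectivity since $F^n(P)$ is a direct summand of $F^n(R^m)\cong R^m$ whenever $P$ is. Also, interpreting $\depth(R/I)$ as $\min_{\mathfrak m\supseteq I}\depth((R/I)_{\mathfrak m})$ makes the local Auslander--Buchsbaum bound globalize to $\pdim_R(R/I)\le d-\depth(R/I)$, which is what the argument needs. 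With those clarifications the proof is complete and faithful to the source the paper cites.
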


\begin{proof}[Proof of Theorem \ref{theorem: depth 3 char p}]
By the Cohen Structure Theorem, there is a complete regular local ring $(S,\fn,k)$ that maps onto $R$. Let $I$ denote the kernel of the surjection $S\to R$. Then it is straightforward to check that, if $\{\fp_1,\dots,\fp_n\}$ is the set of minimal primes of $R$, then the preimages of $\fp_i$, $\{\widetilde{\fp}_1,\dots,\widetilde{\fp}_n\}$, are the minimal primes of $I$ in $S$. It is also clear that
\[\sqrt{\fp_{i_0}+\cdots +\fp_{i_s}}\neq \fm \Leftrightarrow \sqrt{\widetilde{\fp}_{i_0}+\cdots +\widetilde{\fp}_{i_s}}\neq \fn\]
and consequently $\Delta(R)=\Delta(S/I)$.

Since each $\widetilde{\fp}_j$ is a prime ideal in $S$, the punctured spectrum of $S/\widetilde{\fp}_j$ is connected. Hence $\cohodim(S,\widetilde{\fp}_j)\leq \dim(S)-2$ by \cite[Corollaire~5.5]{PeskineSzpiroDimensionProjective}. Furthermore, since $\depth(S/I)\geq 3$, we know that $\cohodim(S,I)\leq \dim(S)-3$ according to Theorem \ref{theorem: Peskine-Szpiro}. Therefore, our conclusion follows directly from Theorem \ref{theorem: main technical theorem} and Remark \ref{remark: coefficients of homology group}.
\end{proof}

To prove Theorem \ref{theorem: depth 3 graded case}, the following result of Varbaro is needed.

\begin{theorem}[Theorem 3.5 in \cite{VarbaroCDgradedChar0}]
\label{theorem: Varbaro}
Let $I$ be an homogeneous ideal of $R=k[x_1,\dots,x_d]$ where $k$ is a field of characteristic 0. If $\depth(R/I)\geq 3$, then $\cohodim(R,I)\leq d-3$.
\end{theorem}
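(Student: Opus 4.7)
The plan is to establish $H^{d-1}_I(R) = H^{d-2}_I(R) = 0$; the vanishing $H^d_I(R) = 0$ is automatic from the Hartshorne-Lichtenbaum theorem since $\depth(R/I) \geq 3$ forces $\dim(R/I) \geq 3$ and so $\sqrt{I} \neq \fm$. In characteristic $p$, both remaining vanishings would be immediate from Peskine-Szpiro (Theorem \ref{theorem: Peskine-Szpiro}) via Frobenius; the whole point here is that one works in characteristic $0$, so a genuine substitute is required.

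A natural first line of attack is Gr\"obner degeneration to the combinatorial setting of monomial ideals. After a generic change of coordinates in characteristic $0$, with respect to reverse lexicographic order, the generic initial ideal $J = \mathrm{gin}(I)$ is Borel-fixed monomial with $\pdim(R/J) = \pdim(R/I)$, hence by Auslander-Buchsbaum $\depth(R/J) = \depth(R/I) \geq 3$. Cohomological dimension is upper semicontinuous under Gr\"obner degeneration, so it suffices to bound $\cohodim(R,J)$. Since $\cohodim(R,J) = \cohodim(R,\sqrt{J})$ and $\sqrt{J}$ is squarefree monomial, Lyubeznik's formula yields $\cohodim(R,\sqrt{J}) = \pdim(R/\sqrt{J}) = d - \depth(R/\sqrt{J})$, so the problem reduces to transferring the depth bound from $R/J$ to $R/\sqrt{J}$ for Borel-fixed $J$. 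This combinatorial step should be accessible through polarization (which preserves depth and replaces $J$ by a squarefree Borel-fixed ideal in a larger polynomial ring) combined with the explicit Eliahou-Kervaire-type resolutions available for Borel-fixed ideals.

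An alternative and perhaps more direct route would invoke Ogus's characterization of cohomological dimension in characteristic $0$ via the algebraic de Rham cohomology of the open complement $U = \Spec R \setminus V(I)$: Grothendieck's connectivity theorem converts $\depth(R/I) \geq 3$ into $1$-connectedness of $\Spec(R/I)$, and a Hodge/Kodaira-type vanishing specific to characteristic $0$ then yields the required de Rham vanishing on $U$ and hence $\cohodim(R,I) \leq d-3$. The main obstacle in either approach is precisely the substitute for Frobenius: in the Gr\"obner route it is the passage from $\depth(R/J)$ to $\depth(R/\sqrt{J})$ for a Borel-fixed $J$, and in the Hodge-theoretic route it is extracting the exact de Rham vanishing range from the relatively mild connectivity hypothesis that the depth bound alone provides.
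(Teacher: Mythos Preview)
The paper does not prove this statement at all: Theorem~\ref{theorem: Varbaro} is quoted as Theorem~3.5 of \cite{VarbaroCDgradedChar0} and invoked as a black box in the proof of Theorem~\ref{theorem: depth 3 graded case}. There is therefore no ``paper's own proof'' to compare your proposal against.

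That said, your first route is essentially Varbaro's actual argument in \cite{VarbaroCDgradedChar0}: pass to $J=\mathrm{gin}(I)$ in revlex (which in characteristic~$0$ is strongly stable and preserves depth), use that cohomological dimension can only go up under this degeneration and depends only on the radical, and then apply Lyubeznik's formula $\cohodim(R,\sqrt{J})=\pdim(R/\sqrt{J})$ for the squarefree monomial ideal $\sqrt{J}$. The step you single out as the obstacle---showing $\depth(R/\sqrt{J})\geq\depth(R/J)$ for strongly stable $J$---is exactly the technical heart of Varbaro's paper, and your proposal does not actually carry it out. Your suggestion of polarization plus Eliahou--Kervaire is not how Varbaro proceeds (polarization lands in a larger polynomial ring and does not by itself compare $J^{\mathrm{pol}}$ with $\sqrt{J}$ back in $R$); he instead gives a direct combinatorial analysis of the generators of a strongly stable ideal and its radical. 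So your outline is architecturally correct and matches the cited source, but as written it stops precisely where the real work begins.
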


\begin{proof}[Proof of Theorem \ref{theorem: depth 3 graded case}]
Since $R$ is standard graded, one can write $R=k[x_1,\dots,x_n]/I$ for a homogeneous ideal $I$ of $S=k[x_1,\dots,x_n]$. Let $\fp_1,\dots,\fp_t$ be the minimal primes of $I$ in $S$. Then $\cohodim(S,\fp_j)\leq \dim(S)-2$ by \cite[Corollaire~5.5]{PeskineSzpiroDimensionProjective}, \cite[Corollary~2.11]{OgusLocalCohomologicalDimension}, and \cite[Theorem~2.9]{HunekeLyuVanishing}. And one has $\cohodim(S,I)\leq \dim(S)-3$ according to Theorem \ref{theorem: Peskine-Szpiro} (characteristic $p$) and Theorem \ref{theorem: Varbaro} (characteristic 0). Therefore, our conclusion follows directly from Theorem \ref{theorem: main technical theorem} and Remark \ref{remark: coefficients of homology group}.
\end{proof}

We conclude this section with the following corollary.

\begin{corollary}
\label{corollary: d-3 generators in regular rings}
Let $(R,\fm,k)$ be an equicharacteristic complete regular local ring whose residue field is separably closed. Assume that an ideal $I$ of $R$ can be generated by $d-3$ elements, where $d=\dim(R)$. Then
\[\widetilde{H}_0(\Delta(R/I);k)=\widetilde{H}_1(\Delta(R/I);k)=0.\]
\end{corollary}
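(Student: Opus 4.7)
The plan is to deduce this corollary from the main technical result, Theorem \ref{theorem: main technical theorem}, in exactly the same spirit as the proofs of Theorems \ref{theorem: depth 3 char p} and \ref{theorem: depth 3 graded case}. Since a complete regular local ring is a complete local domain, $R$ is automatically a $d$-dimensional noetherian complete local domain, so the ambient hypothesis of Theorem \ref{theorem: main technical theorem} is already met. What remains is to verify the two cohomological-dimension bounds required by that theorem: $\cohodim(R,I)\leq d-3$ and $\cohodim(R,\fp_j)\leq d-2$ for every minimal prime $\fp_j$ of $I$.

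For the first bound, since $I$ is generated by $d-3$ elements $f_1,\dots,f_{d-3}$, the \v{C}ech complex on these generators computes $H^*_I(R)$ and has length $d-3$. Hence $H^j_I(R)=0$ for every $j>d-3$, which gives $\cohodim(R,I)\leq d-3$ directly.

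For the second bound, Krull's height theorem forces $\hgt(\fp_j)\leq d-3$, so $R/\fp_j$ is a complete local domain of dimension at least $3$. In particular, $\Spec(R/\fp_j)$ is irreducible, so removing the closed point still leaves an irreducible (and hence connected) space. Therefore the punctured spectrum of $R/\fp_j$ is connected, and by \cite[Corollaire~5.5]{PeskineSzpiroDimensionProjective} in characteristic $p$ (respectively \cite[Corollary~2.11]{OgusLocalCohomologicalDimension} and \cite[Theorem~2.9]{HunekeLyuVanishing} in characteristic $0$) this connectedness implies $\cohodim(R,\fp_j)\leq d-2$, which is exactly what we need.

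With both hypotheses in place, Theorem \ref{theorem: main technical theorem} yields
\[
\widetilde{H}_0(\Delta(R/I);H^d_{\fm}(R))=\widetilde{H}_1(\Delta(R/I);H^d_{\fm}(R))=0,
\]
and because $R$ is equicharacteristic (hence contains a field), Remark \ref{remark: coefficients of homology group} converts this to the desired vanishing $\widetilde{H}_0(\Delta(R/I);k)=\widetilde{H}_1(\Delta(R/I);k)=0$. I do not anticipate any substantive obstacle here; the argument is essentially a bookkeeping exercise that funnels the hypothesis ``$I$ generated by $d-3$ elements'' into the two input bounds of the main technical theorem. The only mild subtlety is ensuring the characteristic-$0$ replacement for Peskine--Szpiro's connectedness-to-cohomological-dimension implication is in hand, which is precisely what Ogus and Huneke--Lyubeznik provide.
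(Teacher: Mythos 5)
Your proposal is correct and follows essentially the same route as the paper: derive $\cohodim(R,I)\leq d-3$ from the \v{C}ech complex on the $d-3$ generators, obtain $\cohodim(R,\fp_j)\leq d-2$ from the connectedness of the punctured spectrum of the domain $R/\fp_j$ via Peskine--Szpiro (resp.\ Ogus and Huneke--Lyubeznik in characteristic $0$), and then apply Theorem \ref{theorem: main technical theorem} together with Remark \ref{remark: coefficients of homology group}. You actually supply a bit more detail than the paper does — the Krull height bound ensuring $\dim(R/\fp_j)\geq 2$ so that the connectedness criterion applies, and the explicit citation for the characteristic-$0$ case where the paper's ``same as in the proof of Theorem \ref{theorem: depth 3 char p}'' technically only covers characteristic $p$.
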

\begin{proof}
Since $I$ can be generated by $d-3$ elements, $\cohodim(R,I)\leq d-3$. Same as in the proof of Theorem \ref{theorem: depth 3 char p}, we have $\cohodim(R,\fp)\leq d-2$ for each minimal prime of $I$. Therefore, our corollary follows directly from Theorem \ref{theorem: main technical theorem}.
\end{proof}

\begin{remark}
In \cite[Theorem~6]{FaltingsFormalFunctions}, it is proved that, {\it if an ideal $I$ of a $d$-dimensional complete local domain $(R,\fm)$ can be generated by $d-2$ elements, then $\Spec(R)\backslash \{\fm\}$ is connected}. One may consider Corollary \ref{corollary: d-3 generators in regular rings} as an extension of Faltings' connectedness theorem. Of course it would be very interesting to know whether the requirement that $R$ be regular can be removed or at least weakened.
\end{remark}


\section{An application to Stanley-Reisner rings}

Let $S$ be a polynomial ring $k[x_1, \dots, x_n]$ where $k$ is any field, and let $K$ be a simplicial complex
with vertices $\{ x_1, \dots, x_n \}$.
In this section we explore the properties of $\Delta(R)$  when $R$ is the Stanley-Reisner ring
$R=k[\Delta]=S/I(K)$. We then apply this to construct Example \ref{Reisner's example} that shows that  Theorem \ref{theorem: depth 3 char p} does not hold
if the reduced homology groups $\widetilde{H}_*(\Delta(R);-)$ are computed with coefficients in $\mathbb{Z}$.

We first recall a definition and a well known result.
\begin{definition}
\label{Definition: Nerve}
Let $K$ be a simplicial complex and let $K_1, \dots, K_t$ be subcomplexes of $K$.
The \emph{nerve of $K_1, \dots, K_t$}, denoted $\mathcal{N}(K_1, \dots, K_t)$,
is the simplicial complex with vertices $K_1, \dots, K_t$ and faces consisting of sets $\{ K_{i_1}, \dots , K_{i_\ell} \}$ such that
$K_{i_1}\cap \dots \cap K_{i_\ell}  \neq \emptyset$.
\end{definition}
The usefulness of this construction derives from the following theorem (cf.~\cite[Theorem 10.6]{Bjorner.Topolocial.Methods}.)
\begin{theorem}
\label{Theorem: homology of nerves}
With the notation above, assume further that $K_1\cup \dots \cup K_t=K$.
If every non-empty intersection $K_{i_1}\cap \dots \cap K_{i_\ell}$ is contractible,
$K$ and  $\mathcal{N}(K_1, \dots, K_t)$ are homotopy equivalent (and hence have same homology groups.)
\end{theorem}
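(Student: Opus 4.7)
The plan is to prove this classical nerve-type theorem by introducing an auxiliary space that mediates between $K$ and $\mathcal{N}=\mathcal{N}(K_1,\dots,K_t)$ and admits natural homotopy equivalences to each. Specifically, I would consider the subspace $X\subset K\times\mathcal{N}$ consisting of all pairs $(x,y)$ such that, if $\sigma=\{K_{i_1},\dots,K_{i_\ell}\}$ is the unique open simplex of $\mathcal{N}$ containing $y$, then $x\in K_{i_1}\cap\cdots\cap K_{i_\ell}$. The two coordinate projections $\pi_K\colon X\to K$ and $\pi_\mathcal{N}\colon X\to\mathcal{N}$ are then the natural candidates for the desired equivalences, and in fact $X$ can be recognized as the homotopy colimit of the diagram of subcomplexes $K_{i_1}\cap\cdots\cap K_{i_\ell}$ indexed by the face poset of $\mathcal{N}$.

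First I would verify that $\pi_K$ is a homotopy equivalence. The fibre over $x\in K$ is canonically the geometric realization of the full subcomplex of $\mathcal{N}$ on the vertex set $\{K_i : x\in K_i\}$; since the $K_i$ cover $K$ this set is non-empty, and every non-empty intersection of those $K_i$ contains $x$ and hence forms a face of $\mathcal{N}$. The fibre is therefore a full simplex, in particular contractible. Next I would verify that $\pi_\mathcal{N}$ is a homotopy equivalence: the fibre over an interior point of the simplex $\{K_{i_1},\dots,K_{i_\ell}\}$ is homeomorphic to $K_{i_1}\cap\cdots\cap K_{i_\ell}$, which is contractible by the very hypothesis of the theorem. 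Composing the resulting equivalences $K\simeq X\simeq\mathcal{N}$ yields the desired statement.

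The main obstacle is promoting pointwise contractibility of fibres to an honest homotopy equivalence of total spaces; the cleanest route is an induction on $t$ via Mayer--Vietoris. For $t=1$ the claim is trivial: $\mathcal{N}$ is a single point and $K=K_1$ is contractible by hypothesis (as the intersection indexed by the vertex $\{K_1\}$). For the inductive step, write $K'=K_1\cup\cdots\cup K_{t-1}$ so that $K=K'\cup K_t$, and observe that $\mathcal{N}(K_1,\dots,K_t)$ is obtained from $\mathcal{N}(K_1,\dots,K_{t-1})$ by attaching the cone (with apex $K_t$) over the nerve of the cover $\{K_t\cap K_i : i<t\}$ of $K_t\cap K'$. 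The inductive hypothesis applies both to $K'$ and to $K_t\cap K'$, since the contractibility of all non-empty intersections is manifestly inherited by these subcovers. Comparing the homotopy push-out squares that express $K$ and $\mathcal{N}(K_1,\dots,K_t)$ as unions, and using that simplicial inclusions are cofibrations so that the push-outs are genuinely homotopy push-outs, one concludes $K\simeq\mathcal{N}(K_1,\dots,K_t)$, recovering the proof in \cite{Bjorner.Topolocial.Methods}.
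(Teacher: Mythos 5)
The paper does not prove this statement---it is quoted as \cite[Theorem 10.6]{Bjorner.Topolocial.Methods} and used as a black box---so there is no in-paper argument to compare against; I assess your proposal on its own terms.

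Your first sketch (the blow-up space $X$ with contractible fibres under both projections) is the standard homotopy-colimit construction, and your identification of the fibres of $\pi_K$ and $\pi_{\mathcal{N}}$ is correct. But as you yourself note, the real content is precisely the step you defer: pointwise contractibility of fibres does not by itself make a map a homotopy equivalence, and you never invoke the tool (Quillen's Theorem~A for posets, or a careful gluing induction) that bridges that gap.

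The Mayer--Vietoris induction you substitute for it has a concrete hole. To compare the two pushouts you need a map of spans
\[
\bigl(K' \hookleftarrow K'\cap K_t \hookrightarrow K_t\bigr)
\;\longrightarrow\;
\bigl(\mathcal{N}(K_1,\dots,K_{t-1}) \hookleftarrow \mathcal{N}' \hookrightarrow \mathrm{Cone}_{K_t}(\mathcal{N}')\bigr)
\]
whose three vertical arrows are homotopy equivalences and whose two squares commute up to homotopy. The right-hand square is harmless because both of its targets are contractible, but the left-hand square is not automatic: your inductive hypothesis only asserts the \emph{existence} of homotopy equivalences $K'\simeq\mathcal{N}(K_1,\dots,K_{t-1})$ and $K'\cap K_t\simeq\mathcal{N}'$, and two unrelated equivalences on the top and bottom of a square need not commute with the inclusions. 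So the gluing lemma for homotopy pushouts cannot be applied as stated. The standard repair is to strengthen the induction so that the equivalence produced is \emph{natural in the cover}---for instance, prove instead that the projections $\pi_K$ and $\pi_{\mathcal{N}}$ from your blow-up space $X$ are equivalences, since these restrict compatibly to subcovers and make the required squares commute on the nose; or construct the equivalence once and for all via a carrier map or subordinate partition of unity. As written, the proposal records the right ideas but does not close the argument.
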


We can now prove the main result of this section.
\begin{theorem}\label{Theorem: homotopy equivalence}
Let $S=k[x_1, \dots, x_n]$, let $K$ be a simplicial complex with vertex set $V=\{x_1, \dots, x_n \}$, and let
$I=I(K)$ be the Stanley-Reisner ideal corresponding to $K$.
Then $\Delta(S/I)$ is homotopy equivalent to $K$.
\end{theorem}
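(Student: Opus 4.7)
The plan is to identify $\Delta(S/I)$ with a nerve of subcomplexes of $K$ and then appeal to Theorem \ref{Theorem: homology of nerves}.

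First I would recall the standard description of the minimal primes of a Stanley-Reisner ideal: the minimal primes of $I=I(K)$ are in bijection with the facets of $K$, with facet $F$ corresponding to $\fp_F=(x_j : x_j\notin F)$. Passing to $R=S/I$, the minimal primes are the images $\fp_F/I$, one per facet. So the vertices of $\Delta(R)$ are indexed by the facets $F_1,\dots,F_t$ of $K$.

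Next, for any facets $F_{i_0},\dots,F_{i_s}$ the sum $\fp_{F_{i_0}}+\cdots+\fp_{F_{i_s}}$ is generated by those variables $x_j$ lying outside at least one of the $F_{i_\ell}$, i.e., it is the prime ideal associated (in the Stanley-Reisner sense) to the face $F_{i_0}\cap\cdots\cap F_{i_s}$. This sum equals $\fm$ if and only if $F_{i_0}\cap\cdots\cap F_{i_s}=\emptyset$, because $\fm$ requires every $x_j$ to appear. Comparing with Definition \ref{definition: the simplicial complex Delta}, this says that $\{F_{i_0},\dots,F_{i_s}\}$ is a simplex of $\Delta(R)$ precisely when $F_{i_0}\cap\cdots\cap F_{i_s}\neq\emptyset$. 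That is, $\Delta(R)$ is exactly the nerve $\mathcal{N}(F_1,\dots,F_t)$ in the sense of Definition \ref{Definition: Nerve}, where each facet is regarded as the subcomplex consisting of itself and its subfaces.

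Finally I apply Theorem \ref{Theorem: homology of nerves} to this cover. The facets cover $K$ by definition, and any non-empty intersection $F_{i_1}\cap\cdots\cap F_{i_\ell}$ is itself a (closed) simplex on the set of common vertices, hence contractible. The hypotheses of Theorem \ref{Theorem: homology of nerves} are therefore satisfied, so $K$ is homotopy equivalent to $\mathcal{N}(F_1,\dots,F_t)=\Delta(S/I)$.

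The proof has no real obstacle; the only potential subtlety is to be careful that the nerve in Definition \ref{Definition: Nerve} uses each facet together with all its subfaces (so that the pairwise intersections are subcomplexes, and indeed simplices), rather than the open simplex interiors; once this is set up, the correspondence between the combinatorics of $\Delta(S/I)$ and of $\mathcal{N}(F_1,\dots,F_t)$ is immediate from the prime avoidance computation above.
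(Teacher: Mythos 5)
Your proof is correct and follows essentially the same route as the paper: identify $\Delta(S/I)$ with the nerve $\mathcal{N}(F_1,\dots,F_t)$ of the facets of $K$ via the complement correspondence between facets and minimal primes, then apply Theorem \ref{Theorem: homology of nerves}. If anything, your computation that $\fp_{F_{i_0}}+\cdots+\fp_{F_{i_s}}=\fm$ precisely when $F_{i_0}\cap\cdots\cap F_{i_s}=\emptyset$ is spelled out a bit more explicitly than in the paper, which helps make the identification with the nerve transparent.
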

\begin{proof}
Let $K_1, \dots, K_t$ be the facets (i.e., maximal faces) of $K$ and note that since every intersection of facets is a face, Theorem
\ref{Theorem: homology of nerves} implies that $K$ and  $\mathcal{N}(K_1, \dots, K_t)$ are homotopy equivalent.

The minimal primes of $I$ are generated by  complements of facets of $K$ (cf.~\cite[Theorem 5.1.4]{BrunsHerzog})
thus $\Delta(S/I)$ is the simplicial complex whose vertices are the complements of facets of $K$
and whose faces consist of
$$\{ \{ V\setminus K_{i_1}, \dots,  V\setminus K_{i_\ell} \} \,|\, (V\setminus K_{i_1}) \cup \dots \cup (V\setminus K_{i_\ell}) \neq V \} .$$

We conclude the proof by exhibiting the isomorphism of simplicial complexes $\Phi: \mathcal{N}(K_1, \dots, K_t) \rightarrow K$
given by $\Phi(K_i)=V\setminus K_i$.
\end{proof}

\begin{example}
\label{Reisner's example}
Let $K$ be Reisner's six-point triangulation of the projective plane (cf.~\cite[Remark 3]{Reisner}) and let
$R=k(K)$ be the Stanley-Reisner ring associated to $K$.
\cite[Theorem 1]{Reisner}  shows that $R$ is Cohen-Macaulay if and only if the characteristic of $k$ is not $2$ thus
its depth of $R$ is 3 when the characteristic of $k$ is not 2.
On the other hand the previous theorem implies that $\widetilde{H}_1(\Delta(R);\mathbb{Z})=\widetilde{H}_1(K;\mathbb{Z})\neq 0$.
We conclude that the analogue of Theorem \ref{theorem: depth 3 graded case} with homology computed with integer coefficients does not hold.
\end{example}

We can now extend Theorem \ref{theorem: depth 3 graded case} for Stanley-Reisner rings as follows.
\begin{corollary}\label{Corollary: depth and homology in Stanley Reisner rings}
Let $S=k[x_1, \dots, x_n]$, let $K$ be a simplicial complex with vertex set $V=\{x_1, \dots, x_n \}$, and let
$I=I(K)$ be the Stanley-Reisner ideal corresponding to $K$.
If $\depth S/I \geq d$ then $\widetilde{H}_j \left( \Delta(S/I), k \right)=0$ for all $0\leq j\leq d-2$.
\end{corollary}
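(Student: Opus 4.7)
The approach is to reduce the statement to a purely topological fact about $K$ and then apply a classical depth formula for Stanley-Reisner rings.

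First, I would apply Theorem \ref{Theorem: homotopy equivalence} to replace $\Delta(S/I)$ by $K$: since $\Delta(S/I)$ is homotopy equivalent to $K$, the reduced singular homology groups agree,
\[\widetilde{H}_j(\Delta(S/I);k)\cong \widetilde{H}_j(K;k),\]
so it suffices to prove $\widetilde{H}_j(K;k)=0$ for $0\le j\le d-2$ whenever $\depth S/I\ge d$.

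Next, I would invoke Hochster's formula for the $\mathbb{Z}^n$-graded local cohomology of $S/I$, whose degree-zero piece specializes to
\[\dim_k H^i_{\fm}(S/I)_0 = \dim_k \widetilde{H}_{i-1}(K;k).\]
The hypothesis $\depth S/I\ge d$ forces $H^i_{\fm}(S/I)=0$ for every $i<d$; hence $\widetilde{H}_{i-1}(K;k)=0$ for all $i\le d-1$, i.e., $\widetilde{H}_j(K;k)=0$ for $0\le j\le d-2$. Combined with the first step, this gives the desired conclusion. Equivalently, one may quote the Reisner-type criterion characterizing $\depth k[K]\ge d$ by the vanishing $\widetilde{H}_i(\mathrm{lk}_K(\sigma);k)=0$ for every face $\sigma\in K$ and every $i<d-2-\dim\sigma$, and then specialize to $\sigma=\emptyset$.

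There is no serious obstacle here: both the homotopy equivalence and Hochster's formula (or equivalently Reisner's criterion) are classical, and the corollary becomes essentially a one-line synthesis. The only point requiring care is keeping the indexing straight when passing from local cohomological vanishing, indexed by the cohomological degree $i$, to homological vanishing, indexed by $j=i-1$; this off-by-one is exactly what produces the bound $d-2$ in the statement.
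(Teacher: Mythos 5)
Your proof is correct and follows essentially the same route as the paper: both reduce to a topological statement about $K$ via Theorem \ref{Theorem: homotopy equivalence} and then invoke Hochster's formula. The only cosmetic difference is that the paper passes through the Auslander--Buchsbaum theorem and the Tor/Betti-number form of Hochster's formula (vanishing of $\widetilde{H}_{\#W-n+j-1}(K_W;k)$ for all $W\subseteq V$, then specializing to $W=V$), while you work directly with the local-cohomology form of Hochster's formula in degree zero together with the characterization of depth by vanishing of $H^i_\fm(S/I)$ for $i<d$; these are equivalent packagings of the same input.
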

\begin{proof}
The Auslander-Buchsbaum theorem implies that
the condition $\depth S/I \geq d$ is equivalent to $\pdim_S S/I \leq n-d$.
This translates, using Hochster's formula (cf.~\cite{HochsterCombinatorics}, \cite[Corollary 5.12]{MillerSturmfels}),
to the vanishing of
$\widetilde{H}_{\# W -n +j -1} \left( K_W, k \right)$ for all $0\leq j\leq d-1$ and all $W\subseteq V$, where $K_W$ denotes
the restriction of $K$ to the subset of its vertices $W$.
In particular, this holds for $W=V$, giving
$\widetilde{H}_{j -1} \left( K, k \right)=0$ for all $0\leq j\leq d-1$.
Theorem \ref{Theorem: homotopy equivalence} now implies
$\widetilde{H}_{j -1} \left(  \Delta(S/I), k \right)=0$ for all $0\leq j\leq d-1$.
\end{proof}

We can also reinterpret Corollary \ref{corollary: d-3 generators in regular rings} in the case of Stanley-Reisner ideals as follows.
Let $K$ be a simplicial complex with vertex set $V=\{x_1, \dots, x_n \}$, let $R=k[x_1, \dots, x_n]$ and let
$I=I(K)$ be the Stanley-Reisner ideal corresponding to $K$. The ideal $I$ is generated by $n-t$ elements if and only if the Alexander dual $K^*$ has at most
$n-t$ facets. Also $\widetilde{H}_{j}(\Delta(R/I), k)=\widetilde{H}_{j}(K, k)= \widetilde{H}_{n-3-j}(K^*, k)$ where the first equality follows from Theorem
\ref{Theorem: homotopy equivalence} and the second from Alexander Duality (cf.~\cite[Theorem 5.6]{MillerSturmfels}).
Now Corollary \ref{corollary: d-3 generators in regular rings} implies the following:
if a simplicial complex $\Delta$ with $n$ vertices has at most $n-3$ facets, then $\widetilde{H}_{n-3}(\Delta, k)=\widetilde{H}_{n-4}(\Delta, k)=0$.
This also leads to the following natural generalization of Corollary \ref{corollary: d-3 generators in regular rings}.

\begin{proposition}\label{Proposition: vanishing homology with few facets}
For a simplicial complex $\Delta$ with $n$ vertices and at most $\mu$ facets,
$\widetilde{H}_{i}(\Delta, k)=0$ for all $i\geq \mu-1$.
\end{proposition}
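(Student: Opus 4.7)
The plan is to apply the nerve theorem (Theorem \ref{Theorem: homology of nerves}) to the cover of $\Delta$ by the subcomplexes generated by its facets, and then to exploit the fact that the resulting nerve is a simplicial complex on at most $\mu$ vertices.

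Let $F_1,\dots,F_s$ be the facets of $\Delta$, where $s\le \mu$, and let $\overline{F_i}$ denote the subcomplex of $\Delta$ consisting of $F_i$ together with all of its subfaces. Each $\overline{F_i}$ is a simplex, hence contractible, and since every face of $\Delta$ lies in some facet we have $\overline{F_1}\cup\cdots\cup\overline{F_s}=\Delta$. Any nonempty intersection $\overline{F_{i_1}}\cap\cdots\cap\overline{F_{i_\ell}}$ is the simplex spanned by the vertices common to $F_{i_1},\dots,F_{i_\ell}$, so it is again contractible. Theorem \ref{Theorem: homology of nerves} therefore yields a homotopy equivalence $\Delta\simeq \mathcal{N}(\overline{F_1},\dots,\overline{F_s})$.

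Writing $\mathcal{N}$ for this nerve, I would observe that $\mathcal{N}$ is a simplicial complex on $s$ vertices and thus a subcomplex of the standard $(s-1)$-simplex. Its dimension is at most $s-1\le \mu-1$, which immediately gives $\widetilde{H}_i(\mathcal{N},k)=0$ for all $i\ge \mu$ on dimension grounds. The only remaining degree is $i=\mu-1$, and a contribution there can arise only if $s=\mu$ and the unique top cell $\{\overline{F_1},\dots,\overline{F_\mu}\}$ lies in $\mathcal{N}$; but in that case $\overline{F_1}\cap\cdots\cap\overline{F_\mu}\ne\emptyset$, so every subintersection is nonempty as well, forcing $\mathcal{N}$ to be the full $(\mu-1)$-simplex, which is contractible. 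Either way $\widetilde{H}_{\mu-1}(\mathcal{N},k)=0$, and transporting along the homotopy equivalence of the previous paragraph finishes the proof.

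There is essentially no obstacle here; the only step that deserves a moment of care is verifying the hypotheses of the nerve theorem, namely that the $\overline{F_i}$ cover $\Delta$ and that all their nonempty intersections are contractible. Both are immediate from the fact that the intersection of closed simplices inside a simplicial complex is again a (closed) simplex.
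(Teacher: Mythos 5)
Your argument is correct and matches the paper's proof: both cover $\Delta$ by the closed facets, invoke Theorem \ref{Theorem: homology of nerves} to pass to the nerve $\mathcal{N}$, and conclude from $\dim\mathcal{N}\le s-1$ together with the observation that equality forces $\mathcal{N}$ to be a (contractible) simplex. You simply spell out the verification of the nerve theorem's hypotheses and the $i=\mu-1$ case in more detail than the paper does.
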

\begin{proof}
Let $K_1\cup \dots \cup K_s$ be the distinct facets of $K$ and
let $\mathcal{N}=\mathcal{N}(K_1, \dots, K_s)$.
Note that $\dim \mathcal{N}\leq s-1$ and that $\mathcal{N}$ is $(s-1)$-dimensional precisely when it is a simplex.
An application of Theorem \ref{Theorem: homology of nerves}
gives $\widetilde{H}_{i}(\Delta, k)= \widetilde{H}_{i}( \mathcal{N}, k)$.
and this vanishes for
all $i\geq s-1$, and since $s\leq \mu$, $\widetilde{H}_{i}(\Delta, k)= 0$ for all $i\geq \mu-1$.
\end{proof}

\begin{corollary}
\label{corollary: generalized d-3 generators in regular rings}
Let $K$ be a simplicial complex with vertex set $V=\{x_1, \dots, x_n \}$, let $R=k[x_1, \dots, x_n]$ and let
$I=I(K)$ be the Stanley-Reisner ideal corresponding to $K$. If $I$ is generated by $n-t$ elements then
$\widetilde{H}_i(\Delta(R/I);k)=0$
for all $0\leq i \leq t-2$.
\end{corollary}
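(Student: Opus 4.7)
The plan is to reduce the statement to Proposition \ref{Proposition: vanishing homology with few facets} applied to the Alexander dual complex $K^*$, using exactly the dictionary between numbers of generators and numbers of facets that the paper records in the paragraph preceding Proposition \ref{Proposition: vanishing homology with few facets}.

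First I would record the translations. Since $I$ is generated by $n-t$ elements, the Alexander dual $K^*$ has at most $n-t$ facets. By Theorem \ref{Theorem: homotopy equivalence}, $\Delta(R/I)$ is homotopy equivalent to $K$, so $\widetilde{H}_i(\Delta(R/I);k)\cong \widetilde{H}_i(K;k)$ for every $i$. Alexander duality then gives $\widetilde{H}_i(K;k)\cong \widetilde{H}_{n-3-i}(K^*;k)$, so altogether
\[
\widetilde{H}_i(\Delta(R/I);k)\cong \widetilde{H}_{n-3-i}(K^*;k).
\]

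Next I would apply Proposition \ref{Proposition: vanishing homology with few facets} to the complex $K^*$, whose number of facets is $\mu\leq n-t$. That proposition yields $\widetilde{H}_{j}(K^*;k)=0$ for all $j\geq \mu-1$, and in particular for all $j\geq n-t-1$.

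Finally I would combine these. For $0\leq i\leq t-2$, the index $j=n-3-i$ satisfies $j\geq n-3-(t-2)=n-t-1$, so the vanishing from Proposition \ref{Proposition: vanishing homology with few facets} applies and gives $\widetilde{H}_{n-3-i}(K^*;k)=0$. Under the isomorphism above, this is exactly $\widetilde{H}_i(\Delta(R/I);k)=0$, as desired. There is no real obstacle here beyond bookkeeping of indices; the whole content has been packaged into Proposition \ref{Proposition: vanishing homology with few facets} and the Alexander duality/nerve dictionary already established in this section.
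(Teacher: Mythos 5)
Your proposal is correct and takes essentially the same approach as the paper: both reduce to the Alexander dual $K^*$ via the homotopy equivalence of Theorem \ref{Theorem: homotopy equivalence} and Alexander duality, then invoke Proposition \ref{Proposition: vanishing homology with few facets}. The only difference is that you spell out the index bookkeeping explicitly, which the paper leaves to the reader.
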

\begin{proof}
Using the previous discussion we reduce the problem to showing that
$\widetilde{H}_{n-3-j}(K^*, k)$ vanishes for $0\leq j\leq t-2$ and this follows from
the application of Proposition \ref{Proposition: vanishing homology with few facets} to $K^*$.
\end{proof}

\section*{Acknowledgements}
We thank Vic Reiner for pointing out to us Theorem \ref{Theorem: homology of nerves}
and its relevance to Example \ref{Reisner's example}. We also thank John Shareshian for pointing out
Proposition  \ref{Proposition: vanishing homology with few facets} to us.

\bibliographystyle{skalpha}
\bibliography{CommonBib}

\end{document}